\numberwithin{equation}{section}
\makeatletter\@addtoreset{equation}{section} 
			\newtheorem{theorem}{Theorem}[section]
			\newtheorem{lemma}[theorem]{Lemma}
			\newtheorem{proposition}[theorem]{Proposition}
			\newtheorem{corollary}[theorem]{Corollary}
			\newtheorem{remark}[theorem]{Remark}
\newcommand{\C}{\mathbb C}       \newcommand{\Z}{\mathbb Z} 	 
\newcommand{\Cch}{\mathcal C}
\newcommand{\Di}{\mathbb{D}}
\newcommand{\gm}{\gamma}
\newcommand{\AnWDi}{H^{2,\gamma}}
\newcommand{\AnTWDi}{\mathcal{A}^{2,\gamma}}
\newcommand{\norm}[1]{{\left\|{#1}\right\|}}    
   \newcommand{\scal}[1]{{\left\langle{#1}\right\rangle}}
    \newcommand{\bz}{\overline{z}}
\newcommand{\bxi}{\overline{\xi}}  
\newcommand{\minmn}{m\wedge n}  
\begin{document}

\title[]{Solid Cauchy transform on weighted poly-Bergman spaces}
\author[R. El Harti, A. Elkachkouri, A. Ghanmi]{R. El Harti,, A. ElKachkouri, A. Ghanmi}
 
 \email{rachid.elharti@uhp.ac.ma}
 \email{elkachkouri.abdelatif@gmail.com} 
 \email{allal.ghanmi@um5.ac.ma} 
 \address{Analysis, P.D.E. and Spctral Geometry, Lab. M.I.A.-S.I., CeReMAR, Department of Mathematics, P.O. Box 1014, Faculty of Sciences, Mohammed V University in Rabat, Morocco}
 \address{Department of Mathematics and Computer Sciences, Faculty of Sciences and
 	Techniques,
 	University Hassan I, BP 577 Settat, Morocco}

\begin{abstract}
	The so-called weighted solid Cauchy transform, from inside the unit disc into the complement of its closure, is considered and their basic properties such as boundedness is studied for appropriate probability measures. 
	The action the disc polynomials is explicitly computed and used to describe the range of its restriction on the weighted true poly-Bergman spaces. 
\end{abstract}


\keywords{ Weighted solid Cauchy transform; Disc polynomials; Weighted true Poly-Bergman spaces, Range}
 

\maketitle
\section{Introduction}
 
 The weighted Cauchy transform 
\begin{align}\label{CT}
\mathcal{C}_{s}^{\mu} f (z)  :=  \frac 1\pi \int_{\Omega} \frac{f(\xi)}{\bz-\bxi} d\mu(z); \,  z\notin \overline{\Omega},
\end{align}
for $z$ in the complement of the closure of the unit disc $\Omega$, $\overline{\Omega}^c:=\C\setminus \overline{\Omega}$,
associated to given measure $\mu$ on a bounded $\Omega$ in the complex plane, can be seen as a specific adjoint of the classical weighted Cauchy transform on $b\Omega$ \cite[p. 89]{Bell2016}.
The importance  of these operator lies  in the fact that its kernel function is the fundamental solution of the $\overline{\partial}$ operator and is  closely connected to the Green function for Dirichlet Laplacian for specific $\Omega$.

The description of the range of  $\mathcal{C}_{s}^{\mu}$, corresponding to specific $\mu$, when acting on the  different  standard spaces of analytic functions was investigated by
many authors, see for instance \cite{Calderon1977,Lutsenko Yulmukhametov1991,NapalkovYulmukhametov1994,NapalkovYulmukhametov1997}.
For the Bergman space $A^2(\Omega)$  of analytic functions on $\Omega$, this problem has been solved by many authors, notably by Napalkov and Yulmukhametov \cite{NapalkovYulmukhametov1994} for Jordan domains and by Merenkov \cite{Merenkov2000,Merenkov2013}  for a large class of domains including those bounded by a Jordan curve $\partial \Omega$ with $area(\partial \Omega)=0$ or the integrable Jordan domains.
Obviously, the restriction of the solid Cauchy transform to $A^2(\Omega)$ is injective
and its image is contained in the space of analytic functions
on $\overline{\Omega}^c$ with $f(\infty)=0$. Moreover, it is a continuous operator \cite{Merenkov2000} from 
$A^2(\Omega )$ into the special Bergman-Sobolev space $B_1^2(\overline{\Omega}^c)$ defined as the space of all holomorphic functions in $\overline{\Omega}^c$ vanishing at infinity and whose derivative belongs to $A^2(\overline{\Omega}^c)$, 
$$B_1^2(\overline{\Omega}^c)=\{f; f^{'}\in A(\overline{\Omega}^c); f(\infty)=0\}.$$ 
More precisely, for domains with sufficiently smooth boundary, it is proved in 
 \cite{NapalkovYulmukhametov1994} that the solid Cauchy transform $\mathcal{C}_{s}:=\mathcal{C}_{s}^{1}$; $\mu =dxdy$ being the area measure, 
   maps the dual space $A^{2*}(\Omega)$  of $A^2(\Omega)$ continuously onto $B_1^2(\overline{\Omega}^c)$.
In \cite{Merenkov2000}, it is proved that $\mathcal{C}_{s}(A^{*}(\Omega))=B_1^2(\overline{\Omega}^c)$ remains valid if $\Omega$ is a quasidisc, i.e., an interior of a Jordan curve $\mathcal{J}$ for which there exists $M>0$ satisfying $diam(\mathcal{J}(a,b))\leq M|a-b|$ for any $a,b\in \mathcal{J}$. Here $\mathcal{J}(a,b)$ is an arc of the smaller diameters of $\mathcal{J}$ joining $a$ and $b$.
For bounded Jordan domain, it turns out to be a sufficient and necessary condition \cite{NapalkovYulmukhametov1997}. Namely,   
the Cauchy transformation is a surjective continuous operator from $A^{2*}(\Omega)$ onto $B_1^2(\overline{\Omega}^c)$  if and only if the domain $\Omega$  is a quasidisk.
 This characterization has been next used in \cite{IsaevYoulmukhametov2004}  to investigate the action of the Laplace transform on Bergman spaces.

 In the present paper, we consider similar problem for the  weighted solid Cauchy transform with respect to $\omega_\alpha(|\xi|^2): (1-|\xi|^2)^\alpha$, \cite{Bell2016} 
 \begin{align}\label{CT}
 \mathcal{C}_{s}^{\omega_\alpha} f (z)  :=  \frac 1\pi \int_{\Di} \frac{f(\xi)}{z-\xi} \omega_\alpha(|\xi|^2) dxdy; \quad z\in \overline{\Di}^c,
 \end{align}
  when acting on weighted true poly-Bergman spaces $\AnTWDi_n(\Di)$ in the unit disc $\Di$, which are specific generalization of the classical weighted Bergman space to the polyanalytic setting. Thus, our main task is to determinate the range of $\mathcal{C}_{s}$ when acting on the $n$-th weighted true poly-Bergman space $\AnTWDi_n(\Di)$ defined in \cite{ElHGhIn} 
 \begin{align}\label{SpaceA}\AnTWDi_{n}(\Di)= \AnWDi_n(\Di) \ominus \AnWDi_{n-1}(\Di)
 \end{align}
 for $n\geq 1$ and  $\mathcal{A}_{0}^{2,\gamma}(\Di) = \AnTWDi(\Di)$.
 Here $\AnWDi_m(\Di)$ denotes the space of polyanalytic functions of order $n+1$ that are square integrable on $\Di$ with respect to given radial weight function $\omega_\gamma(|\zeta|^2): (1-|\zeta|^2)^\gamma$, $\AnWDi_m(\Di) :=  
 ker \partial^{n+1}_{\bz} \cap L^{2,\gamma}(\Di)$, with $\partial_{\bz}$ is the standard Wirtinger derivative. 
The main result shows that the restriction of  ${\Cch_s}^{\omega_\alpha}$ to $\AnTWDi_{n}(\Di)$ is a finite dimensional vector space contained in the one spanned by 
$ z^{n-m+1}$; $m=0, 1, 2, \cdots , n .$ Its precise dimension depends on the quantization of $\gamma -\alpha$ and the order of polyanalyticity $n+1$. More precisely, it can be stated as follows

  \begin{theorem}\label{thm1}
  	Let $\gamma >-1$ and $\alpha >(\gamma-1)/0$. Then,   ${\Cch_s}^{\omega_\alpha}(\AnTWDi_{n}(\Di))$ to $\AnTWDi_{n}(\Di)$ is a finite dimensional vector of dimension $N=\dim({\Cch_s}^{\omega_\alpha}(\AnTWDi_{n}(\Di)))=\min (n,\alpha-\gamma+1)+1$ when $\gamma -\alpha \in \Z^-_0$ and $N=n+1$. otherwise.  
  \end{theorem}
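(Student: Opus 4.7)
The natural plan is to reduce the statement to an explicit computation on an orthogonal basis of $\AnTWDi_{n}(\Di)$. Such a basis is provided by the disc polynomials $\{Z^{\gamma}_{m,n}\}_{m\geq 0}$ developed in \cite{ElHGhIn}, so it suffices to evaluate $\Cch_{s}^{\omega_{\alpha}} Z^{\gamma}_{m,n}$ explicitly and then read off the dimension of the span of the nonzero images. For $z\in\overline{\Di}^{\,c}$ and $\xi\in\Di$ one has the geometric expansion
\begin{equation*}
\frac{1}{z-\xi} \;=\; \sum_{k\geq 0} \frac{\xi^{k}}{z^{k+1}},
\end{equation*}
and the integrability assumption $\alpha>(\gamma-1)/2$, together with uniform convergence on compact subsets of $\overline{\Di}^{\,c}$, permits commuting the series with the integral. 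The problem then reduces to the scalar moments
\begin{equation*}
I_{k,m,n} \;=\; \frac{1}{\pi}\int_{\Di} \xi^{k}\, Z^{\gamma}_{m,n}(\xi)\, (1-|\xi|^{2})^{\alpha}\, dxdy.
\end{equation*}

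The structural representation of the disc polynomials, namely $Z^{\gamma}_{m,n}(\xi)=\bxi^{\,n-m}p_{m}(|\xi|^{2})$ for $m\leq n$ and $Z^{\gamma}_{m,n}(\xi)=\xi^{\,m-n}q_{n}(|\xi|^{2})$ for $m\geq n$, with $p_{m},q_{n}$ of Jacobi type, combined with polar integration annihilates every Fourier mode except one: $I_{k,m,n}=0$ unless $m\leq n$ and $k=n-m$. Consequently
\begin{equation*}
\Cch_{s}^{\omega_{\alpha}} Z^{\gamma}_{m,n}(z) \;=\; \begin{cases} \dfrac{I_{n-m,m,n}}{z^{\,n-m+1}}, & 0\leq m\leq n,\\[4pt] 0, & m>n,\end{cases}
\end{equation*}
placing the image of $\AnTWDi_{n}(\Di)$ inside the $(n+1)$-dimensional span of $\{1/z^{\,n-m+1}: m=0,\ldots,n\}$ predicted by the theorem. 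The dimension question reduces to counting the indices $m\in\{0,\ldots,n\}$ for which $I_{n-m,m,n}\neq 0$.

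Substituting $t=|\xi|^{2}$ and invoking the Jacobi representation of $p_{m}$, the moment $I_{n-m,m,n}$ equals, up to a positive constant,
\begin{equation*}
\int_{0}^{1} t^{\,n-m}(1-t)^{\alpha}\, P_{m}^{(\gamma,n-m)}(1-2t)\, dt \;=\; \int_{0}^{1} t^{\,n-m}(1-t)^{\gamma}\,(1-t)^{\alpha-\gamma}\, P_{m}^{(\gamma,n-m)}(1-2t)\, dt.
\end{equation*}
If $\alpha-\gamma\notin\N_{0}$, equivalently $\gamma-\alpha\notin\Z^{-}_{0}$, the factor $(1-t)^{\alpha-\gamma}$ is not polynomial in $t$, and a closed-form evaluation (either through a ${}_{3}F_{2}$ identity or via integration by parts against the Rodrigues formula for $P_{m}^{(\gamma,n-m)}$) produces a nonzero multiple of a ratio of Gamma functions, so all $n+1$ coefficients are nonzero. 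If $\alpha-\gamma=j\in\N_{0}$, the factor $(1-t)^{j}$ is a polynomial of degree $j$ in $t$, and the orthogonality of $P_{m}^{(\gamma,n-m)}(1-2t)$ against polynomials of degree less than $m$ with respect to $t^{\,n-m}(1-t)^{\gamma}dt$ forces $I_{n-m,m,n}=0$ exactly when $m>j$; counting the survivors in $\{0,\ldots,n\}$ yields the dimension stated in the quantized case.

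The main technical obstacle is the Jacobi integral step: the parameters of $P_{m}^{(\gamma,n-m)}$ and the weight exponents $\alpha,\gamma$ must be matched with enough care to produce the sharp threshold, and in particular to recover the precise shift in $\min(n,\alpha-\gamma+1)+1$ from the normalization conventions of $Z^{\gamma}_{m,n}$. The remaining steps, namely the geometric expansion and the termwise interchange of series and integral, are routine under the hypothesis $\alpha>(\gamma-1)/2$ and standard dominated convergence on compact subsets of $\overline{\Di}^{\,c}$.
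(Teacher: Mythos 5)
Your strategy coincides with the paper's: both work with the orthogonal basis of disc polynomials of $\AnTWDi_{n}(\Di)$, expand $(z-\xi)^{-1}$ geometrically, and reduce everything to a single radial moment whose vanishing is governed by the Pochhammer factor $(\gamma-\alpha)_m$. The only genuine divergence is in how that moment is evaluated: the paper expands $\mathcal{R}^{\gamma}_{m,n}$ into the monomials $\xi^{m-j}\bxi^{n-j}(1-|\xi|^2)^j$, integrates term by term, and sums the resulting finite series by the Chu--Vandermonde identity to obtain $c^{\gamma,\omega_\alpha}_{m,n}=\varepsilon_{n-m}\,(\gamma-\alpha)_m\, n!/\big((\alpha+1)_{n+1}(\gamma+1)_m\big)$, whereas you use the representation $\mathcal{R}^{\gamma}_{m,n}=\bxi^{\,n-m}p_m(|\xi|^2)$ with $p_m$ a Jacobi polynomial and dispose of the integral by orthogonality of $P_m^{(\gamma,n-m)}$ against the polynomial $(1-t)^{\alpha-\gamma}$ when $\alpha-\gamma\in\{0,1,2,\dots\}$ and $m>\alpha-\gamma$. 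The two routes are equivalent (an $m$-fold integration by parts against the Rodrigues formula reproduces exactly the factor $(\gamma-\alpha)_m$), and what your version buys is that the quantized vanishing becomes conceptually transparent; what it costs is that orthogonality alone only gives the vanishing half, so you should make the non-vanishing for $m\le\alpha-\gamma$ (and for all $m$ in the non-quantized case) explicit via the closed form rather than asserting it. One point to settle rather than gloss: your threshold gives survivors $m\in\{0,\dots,\min(n,\alpha-\gamma)\}$, hence $N=\min(n,\alpha-\gamma)+1$, not the stated $\min(n,\alpha-\gamma+1)+1$. This is not a defect of your argument: since $(\gamma-\alpha)_m\neq 0$ exactly when $m\le\alpha-\gamma$, the displayed formula in the theorem is off by one (the paper's own remark that the range is one-dimensional when $\alpha=\gamma$ agrees with your count, not with the printed formula), so do not try to manufacture the extra unit.
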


  \begin{corollary} \label{cor1}
	Under the condition above, the null space of the restriction of $ {\Cch_s}^{\omega_\alpha}$ to $\AnTWDi_{n}(\Di)$ is spanned by the disc polynomials $\mathcal{R}^{\gamma}_{m,n}$; $m \geq  \min (n,\alpha-\gamma+1)$. 
\end{corollary}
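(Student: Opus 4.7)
The plan is to deduce the corollary directly from Theorem \ref{thm1} together with the explicit formula for the action of $\mathcal{C}_s^{\omega_\alpha}$ on a single disc polynomial, which the abstract announces as one of the paper's primary computations. Since the family $\{\mathcal{R}^\gamma_{m,n}\}_{m}$ is known to form an orthogonal basis of the weighted true poly-Bergman space $\AnTWDi_{n}(\Di)$, identifying the kernel of the restriction reduces to determining which individual basis elements are annihilated.

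First I would invoke the explicit computation, which yields
\begin{equation*}
\mathcal{C}_s^{\omega_\alpha}\bigl(\mathcal{R}^\gamma_{m,n}\bigr)(z) \;=\; c_{m,n}\, z^{n-m+1}, \qquad z\in\overline{\Di}^c,
\end{equation*}
where $c_{m,n}$ is a ratio of Gamma functions in $m,n,\alpha,\gamma$. This formula can be obtained by expanding the Cauchy kernel $1/(z-\xi)$ as a geometric series in $\xi/z$ (valid since $|\xi|<1\leq|z|$) and integrating termwise in polar coordinates against the Rodrigues-type expression for $\mathcal{R}^\gamma_{m,n}$, the radial integrals reducing to standard Beta integrals.

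Next, for any $f=\sum_{m} a_m\, \mathcal{R}^\gamma_{m,n}\in\AnTWDi_{n}(\Di)$, the continuity of $\mathcal{C}_s^{\omega_\alpha}$ established in the preceding sections allows the transform to be exchanged with the sum, giving
\begin{equation*}
\mathcal{C}_s^{\omega_\alpha}(f)(z) \;=\; \sum_{m} a_m\, c_{m,n}\, z^{n-m+1}.
\end{equation*}
Since the monomials $z^{n-m+1}$ are linearly independent over $\overline{\Di}^c$, it follows that $f$ lies in $\ker \mathcal{C}_s^{\omega_\alpha}\!\!\mid_{\AnTWDi_{n}(\Di)}$ if and only if $a_m c_{m,n}=0$ for every admissible $m$. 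Thus the kernel is spanned by those $\mathcal{R}^\gamma_{m,n}$ for which $c_{m,n}=0$, and reading off the Gamma-factor vanishing conditions recovers the index range $m\geq \min(n,\alpha-\gamma+1)$ stated in the corollary: in the regime $\gamma-\alpha\in\Z^-_0$ a Pochhammer factor in the numerator of $c_{m,n}$ truncates at the lower threshold $\alpha-\gamma+1$, whereas otherwise only the polyanalytic bound $n$ survives.

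The main obstacle is keeping this bookkeeping consistent with the dimension count $N$ of Theorem \ref{thm1}: one must verify that the indices $m$ with $c_{m,n}\neq 0$ form a set of cardinality exactly $N$, so that the partition of the basis into kernel and non-kernel disc polynomials recovers the image dimension claimed in the theorem. This amounts to a careful analysis of which Gamma poles and zeros cancel in $c_{m,n}$ at the boundary index, and it is precisely this cancellation that produces the dimensional jump between the two regimes of Theorem \ref{thm1}. Once this consistency check is settled, the corollary follows with no further work.
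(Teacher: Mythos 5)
Your proposal follows essentially the same route as the paper: compute $\mathcal{C}_s^{\omega_\alpha}(\mathcal{R}^\gamma_{m,n})$ explicitly via the geometric expansion of the Cauchy kernel, observe that the images are (up to constants) the linearly independent powers $1/z^{n-m+1}$, and read off from the Pochhammer factor $(\gamma-\alpha)_m$ and the factor $\varepsilon_{n-m}$ exactly which basis elements are annihilated, the two regimes of $\gamma-\alpha$ giving the two truncation thresholds. The only slip is notational (the image is $z^{m-n-1}$, i.e.\ $1/z^{n-m+1}$, not $z^{n-m+1}$), and your extra step of justifying the interchange of the transform with the infinite basis expansion by boundedness is a point the paper passes over silently.
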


 \begin{corollary} \label{cor2}
	The spaces  ${\Cch_s}^{\omega_\alpha}(\AnTWDi_{n}(\Di))_{n}$ form an increasing sequence of spaces in $L^{2,\gamma}(\Di)$.
\end{corollary}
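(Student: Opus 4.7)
The approach is to reduce Corollary \ref{cor2} to an elementary monotonicity statement about the finite-dimensional image spaces identified in Theorem \ref{thm1}. The starting point is that the disc polynomials $\{\mathcal{R}^{\gamma}_{m,n}\}_{m\geq 0}$ form an orthogonal Hilbert basis of $\AnTWDi_n(\Di)$, so by continuity of ${\Cch_s}^{\omega_\alpha}$ one has
\[
{\Cch_s}^{\omega_\alpha}(\AnTWDi_n(\Di))=\overline{\mathrm{span}}\bigl\{{\Cch_s}^{\omega_\alpha}(\mathcal{R}^{\gamma}_{m,n}):m\geq 0\bigr\}.
\]
By Corollary \ref{cor1} the indices $m\geq \min(n,\alpha-\gamma+1)$ contribute trivially, leaving exactly $N_n$ non-zero generators, in agreement with the dimension count of Theorem \ref{thm1}.

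Next I would invoke the explicit action of ${\Cch_s}^{\omega_\alpha}$ on the disc polynomials that underlies the proof of Theorem \ref{thm1}. Using the geometric expansion $\frac{1}{z-\xi}=\sum_{k\geq 0}\xi^{k}/z^{k+1}$ valid on $\overline{\Di}^c$, together with the orthogonality of $\mathcal{R}^{\gamma}_{m,n}$ against the weight $\omega_\alpha$, each surviving ${\Cch_s}^{\omega_\alpha}(\mathcal{R}^{\gamma}_{m,n})$ collapses to a scalar multiple of the single monomial indexed by $n-m+1$ in the convention of Theorem \ref{thm1}. Thus the image of $\AnTWDi_n(\Di)$ is the span of an explicit family of $N_n$ such monomials.

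The final step is to show that these spans are nested as $n$ increases. After the reindexing $k=n-m+1$, the image of $\AnTWDi_n(\Di)$ should be identifiable with the span of the first $N_n$ elements of the common monomial family $\{z^{k}\}_{k\geq 1}$ (or its exterior analogue $\{z^{-k}\}_{k\geq 1}$), independently of $n$. Because $N_n=\min(n,\alpha-\gamma+1)+1$, respectively $N_n=n+1$, is nondecreasing in $n$ in both regimes of Theorem \ref{thm1}, the inclusion
\[
{\Cch_s}^{\omega_\alpha}(\AnTWDi_n(\Di))\subseteq{\Cch_s}^{\omega_\alpha}(\AnTWDi_{n+1}(\Di))
\]
follows at once.

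The main obstacle is the third step: one must verify that the $N_n$ monomials appearing in the image of $\AnTWDi_n$ really do form an initial segment of a fixed family, rather than an $n$-dependent sliding window. This reduces to extracting, from the explicit formulas used for Theorem \ref{thm1}, the triangular structure of the coefficients $c_{m,n}$; once this is in hand, each generator of the $n$-th image is immediately exhibited as a linear combination of generators of the $(n+1)$-th image, and the ascending chain property is concluded by a straightforward linear-algebra argument.
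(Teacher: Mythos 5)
Your reduction is the right one, and it is in substance the paper's own (one-line) argument: by Proposition \ref{propactionZ} each disc polynomial $\mathcal{R}^{\gamma}_{m,n}$ is sent to a scalar multiple of the \emph{single} monomial $z^{m-n-1}=z^{-(n-m+1)}$, so ${\Cch_s}^{\omega_\alpha}(\AnTWDi_{n}(\Di))$ is the span of finitely many distinct negative powers of $z$, and the corollary reduces to checking that the corresponding index sets $S_n\subset\N$ are nested in $n$. But the third step, which you yourself flag as the main obstacle, is exactly where the argument breaks, and the repair you sketch cannot work: there is no ``triangular structure'' of coefficients to extract, because each generator of the image is one monomial, and monomials of distinct degrees are linearly independent, so $\mathrm{Span}\{z^{-k}:k\in S_n\}\subseteq \mathrm{Span}\{z^{-k}:k\in S_{n+1}\}$ holds if and only if $S_n\subseteq S_{n+1}$. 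No linear-algebra manipulation of the generators can circumvent this.

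Computing the index sets from the vanishing pattern of $c^{\gamma,\omega_\alpha}_{m,n}$ (via $k=n-m+1$): when $\gamma-\alpha\notin\Z^-_0$ one gets $S_n=\{1,\dots,n+1\}$, an initial segment, so the nesting holds and your argument closes in that regime. When $\gamma-\alpha\in\Z^-_0$, however, $S_n=\{\,n+1-\min(n,\alpha-\gamma+1),\dots,n+1\,\}$, which for $n$ beyond the threshold is a sliding window rather than an initial segment. For instance, for $\alpha=\gamma$ the paper's own remark records that ${\Cch_s}^{\omega_\alpha}(\AnTWDi_{n}(\Di))=\mathrm{Span}\{z^{-(n+1)}\}$, and $\mathrm{Span}\{z^{-1}\}\not\subseteq\mathrm{Span}\{z^{-2}\}$. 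So in the quantized case the inclusion you are after is contradicted by the very formulas of Theorem \ref{thm1}; the monotonicity statement is only sustainable for $\gamma-\alpha\notin\Z^-_0$, and the honest conclusion is to restrict the claim to that case rather than to look for a way to fill the gap.
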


%
%


This note is outlined as follows. In Section 2, we briefly review from \cite{Ramazanov2002,ElHartiElkGh2020A} some needed facts on the underlying true weighted poly-Bergman space $\AnTWDi_n(\Di)$. Section 3 is devoted to discuss the boundedness of the weighted solid Cauchy transform $\mathcal{C}_{s}^{\mu}$ for specific weight functions. In Section 4, we collect and establish needed tools on the action of $\mathcal{C}_{s}^{\omega_\alpha}$ on the weighted poly-Bergman spaces $\AnTWDi_n(\Di)$. The proof of Theorem \ref{thm1} and their corollaries is presented in Section 5.

\section{Preliminaries on weighted true poly-Bergman spaces}

Let  $\gamma > -1$ and denote by 
 $L^{2,\gamma}(\Di):=L^{2}\left(\Di,d\mu_{\gamma}\right)$ the Hilbert space
of complex-valued functions in the unit disk $\Di=\{z\in \C; \, |z|<1\}$ with the norm induced from the scalar product
$$ \scal{ f,g}_\gm := \int_{\Di} f(z) \overline{g(z)} (1-|z|^2)^{\gamma} dxdy ; \quad z = x+iy \in \Di. $$
Orthogonal decompositions of $L^{2,\gamma}(\Di)$ can be provided by means of the so-called polyanalytic functions \cite{Balk1991} which are solutions of the generalized Cauchy--Riemann equation on the unit disc $\Di$, $$\partial^{n+1}_{\bz} f = \frac{\partial^{n+1} f}{\partial \bz^{n+1}}  =\frac{1}{2} \left( \frac{\partial}{\partial x} + i \frac{\partial}{\partial y}\right)^{n+1}f =0.$$ 
In fact, we have the orthogonal Hilbertian decompositions 
$$\displaystyle L^{2,\gamma}(\Di) =  \bigoplus_{n=0}^\infty \AnTWDi_{n}(\Di) \quad  \mbox{and} \quad \AnWDi_n(\Di) = \bigoplus_{k=0}^n \mathcal{A}_{k}^{2,\gamma}(\Di),$$
where the space $\AnTWDi_{n}(\Di)$ are as in \eqref{SpaceA}. They are closed subspaces of $L^{2,\gamma}(\Di)$and form an orthogonal sequence of reproducing kernel Hilbert spaces.  
 An orthogonal basis of $\AnTWDi_{n}(\Di)$ is shown in \cite{ElHartiElkGh2020A}  to be given by the disc polynomials \cite{
	Koornwinder75,Koornwinder78,
	Dunkl84,
	Aharmim2015,ElHGhIn}  
\begin{align} \label{expZ}
\mathcal{R}_{m,n}^{\gamma}(z,\bar{z})
= m!n!\sum_{j=0}^{\minmn }
\frac{(-1)^{j} (1 -  z\bz)^{j}} {j! (\gamma+1)_j } \frac{ z^{m-j}}{(m-j)! } \frac{\bz^{n-j} }{(n-j)!} 
	\end{align}
	for varying $m=0,1,2, \cdots$.
Above, $m\wedge n=\min(m,n)$ and $(a)_k=a(a+1) \cdots (a+k-1)$ denotes the Pochhammer symbol.
The completeness of the system $\mathcal{R}_{m,n}^{\gamma}$ in $L^{2,\gamma}(\Di)$ was crucial in exploring the so-called weighted true poly-Bergman spaces. 

It should be mentioned here that for $\gamma=0$, the spaces $\mathcal{A}_{m}^{2,0}(\Di)$ reduces further to the poly-Bergman spaces studied in \cite{Ramazanov1999,Vasilevski2000,RozenblumVasilevski2019}. 

\section{Boundedness of ${\Cch_s}$}

Let $\omega$ be a weight function on the segment $(0,1)$ such that the associated moment 
\begin{align}\label{assumomega}
\gamma_{n}^{\omega} &:= \int_0^1 t^n \omega(t) dt \leq \gamma_{0}^{\omega} ; \quad n=0,1,2, \cdots , 
\end{align}
are finite. Associated to $\omega$ that we extend to a measure on the unit disc $\Di$ in the usual way by considering $\omega(|z|^2)$ for $z\in \Di$, we consider the corresponding weighted solid Cauchy transform $\left[   {\Cch_s}^{\omega}  (f) \right](z)$ in \eqref{CT}
with $\Omega =\Di$ and $z\in \overline{\Di}^c $. We consider its action on the Hilbert space  
$L^{2,A}(\Di):=L^2(\Di;A(|\xi|^2) d\lambda)$ of all complex-valued functions on $\Di$ such that are square integrable with respect to given measure $A(|\zeta|^2)d\lambda$, $d\lambda$ being the Lebesgue measure. In a similar way, we define the Hilbert space 
$L^{2,B}(\overline{\Di}^c):=L^2(\overline{\Di}^c;B(|\xi|^2) d\lambda)$ for given  weight function $B(|\xi|^2)$ in $\overline{\Di}^{c}$, arising from a weight function on $(1,+\infty)$. We denote by $\scal{ \cdot, \cdot }_{A}$ and $\scal{ \cdot, \cdot }_{B}$ the associated scalar product and by $\norm{ \cdot}_{A}$ and $\norm{ \cdot}_{B}$ the associated  associated norms, respectively.

In the sequel, we provide sufficient conditions on $\omega$, $A$ and $B$ to ${\Cch_s}$ be a bounded operator from $L^2_A(\Di)$ into $L^{2,B}(\overline{\Di}^c)$. Namely, we assume that 
\begin{align}\label{assumstar1}
V_{\omega^2/A}:= \int_0^1 \frac{\omega^{2}(t)}{A(t)}  dt < \infty 
\end{align}
and 
\begin{align}\label{assumstar2}
 W_{B} :=\int_0^1   \frac{B(1/t^2)}{t(1-t)^2} dt <\infty.
\end{align}

\begin{proposition} 
	Under \eqref{assumstar1}, the transform ${\Cch_s}^{\omega}$ is a well defined bounded operator from $ L^{2,A}(\Di) $ to  Hilbert space $L^{2,B}(\overline{\Di}^c)$.
\end{proposition}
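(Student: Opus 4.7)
The approach is to prove a pointwise estimate of the form $|\mathcal{C}_s^\omega f(z)|^2 \leq C(z)\norm{f}_A^2$ for $z\in\overline{\Di}^c$, and then integrate $C(z)$ against $B(|z|^2)\mes$. For the pointwise step, I apply Cauchy--Schwarz in $L^2(\Di,\mes)$ after factoring the integrand of $\pi\,\mathcal{C}_s^\omega f(z)$ as
$$\bigl(f(\xi)\sqrt{A(|\xi|^2)}\bigr)\cdot\frac{\omega(|\xi|^2)}{\sqrt{A(|\xi|^2)}\,(z-\xi)}.$$
This yields
$$|\mathcal{C}_s^\omega f(z)|^2 \;\leq\; \frac{\norm{f}_A^2}{\pi^2}\int_\Di \frac{\omega^2(|\xi|^2)}{A(|\xi|^2)\,|z-\xi|^2}\,\mes(\xi),$$
and in particular shows that the defining integral converges absolutely for each $z$, giving well-definedness.

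I then evaluate the $\xi$-integral in polar coordinates $\xi = re^{i\theta}$ using the classical identity
$$\int_0^{2\pi}\frac{d\theta}{|z-re^{i\theta}|^2} \;=\; \frac{2\pi}{|z|^2 - r^2},\qquad r<|z|,$$
which can be obtained by residues or by expanding as a geometric series and integrating term by term. Since $r\in(0,1)$ and $|z|>1$, I use the crude bound $|z|^2 - r^2 \geq |z|^2-1$ and substitute $t=r^2$ in the radial integral to obtain
$$\int_\Di \frac{\omega^2(|\xi|^2)}{A(|\xi|^2)\,|z-\xi|^2}\,\mes(\xi) \;\leq\; \frac{\pi\, V_{\omega^2/A}}{|z|^2-1},$$
which is finite by assumption \eqref{assumstar1}.

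Finally I integrate this pointwise bound in $z$. Writing $z=\rho e^{i\varphi}$ with $\rho>1$,
$$\norm{\mathcal{C}_s^\omega f}_B^2 \;\leq\; \frac{V_{\omega^2/A}\norm{f}_A^2}{\pi}\cdot 2\pi\int_1^\infty \frac{\rho\, B(\rho^2)}{\rho^2-1}\,d\rho.$$
The substitution $\rho=1/t$ converts the outer integral to (a multiple of) $\int_0^1 B(1/t^2)/\bigl(t(1-t^2)\bigr)\,dt$, and the elementary inequality $1-t^2 = (1-t)(1+t) \geq (1-t)^2$ on $(0,1)$ dominates this by $W_B$, which is finite by \eqref{assumstar2}. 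Combining the constants produces the desired norm estimate. The only genuinely nontrivial ingredient is the angular identity; everything else is Cauchy--Schwarz, polar coordinates, and a single change of variables, so I do not anticipate any substantive obstacle beyond careful bookkeeping.
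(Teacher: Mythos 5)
Your proof is correct and follows essentially the same route as the paper: the same Cauchy--Schwarz factorization $\bigl(f\sqrt{A}\bigr)\cdot\bigl(\omega/(\sqrt{A}(z-\xi))\bigr)$ giving a pointwise bound, followed by integration of that bound against $B(|z|^2)\,d\lambda$ and the substitution $r=1/t$ to reach $W_B$. The only difference is that you evaluate the angular integral $\int_0^{2\pi}|z-re^{i\theta}|^{-2}\,d\theta=2\pi/(|z|^2-r^2)$ exactly where the paper uses the cruder uniform bound $|z-\xi|^{-2}\leq(|z|-1)^{-2}$; this yields a sharper pointwise estimate near $|z|=1$ (and would in fact suffice under the weaker hypothesis $\int_0^1 B(1/t^2)/(t(1-t))\,dt<\infty$), but since you then dominate by $W_B$ via $1-t^2\geq(1-t)^2$, the conclusion and hypotheses coincide with the paper's.
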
 

\begin{proof}
	Using Cauchy Schwartz inequality, it is not hard to show that ${\Cch_s}^{\omega} $ is well defined on $L^{2,A}(\Di)$. Indeed, for any $f\in L^{2,A}(\Di)$ we have
	\begin{align}
	\left| {\Cch_s}^{\omega}(f) (z)\right| &\leq  \dfrac{1}{\pi}  
	\left( \int_{\Di} \frac{\omega^{2} (|w|^2)}{|w-z|^{2}A(|w|^2)}d\lambda(w)\right)^{1/2} \norm{f}_A \nonumber
	\leq \dfrac{ \left( V_{\omega^2/A} \right)^{1/2}}{\pi(|z|-1)}   \norm{f}_A 
	\label{estimate1}
	\end{align} 
	which is finite by means of \eqref{assumstar1} and the fact that $|w-z|^{-2}$, for varying $w\in \Di$, satisfies  
	$|w-z|^{-2}\leq  (|z|-1)^{-2}$. 
	Therefore, it follows 
	\begin{align*}
	\norm{{\Cch_s}^{\omega} (f) }^{2}_{B} 
	&\leq    \frac{2}{\pi} V_{\omega^{2}/A}  
	\left( \int_1^{\infty} \dfrac{B(r^2)}{ (r-1)^2}   r dr \right)   \norm{f}_A^2  
	 \\& \leq \frac{2}{\pi} V_{\omega^{2}/A}  
	\left( \int_0^1 \frac{B(1/t^2)}{t(1-t)^{2}}
	dt \right)   \norm{f}_A^2 . 
	\end{align*}
	Then, one concludes for the boundedness of the solid Cauchy transform ${\Cch_s}^{\omega}$ from $L^{2,A}(\Di)$ into $L^{2,B}(\overline{\Di}^c)$ thanks to assumptions \eqref{assumstar1} and \eqref{assumstar2}. 
\end{proof}

\begin{remark} 
	A precise estimate for the norm operator of the solid weighted Cauchy transform ${\Cch_s}^{\omega}$ can be given for explicit wight functions satisfying assumptions \eqref{assumstar1} and \eqref{assumstar2}. For example for $A(t)=\omega(t)=\omega_\gamma(t) =(1-t)^\gamma$ and $B(t)= B_{a,b}(t):= t^a(t-1)^b$ with $-a<b<-1<\gamma$, the evaluation of the integrals giving $V_{\omega^{2}/A}$ and $W_{B}$  yields  
	\begin{equation}\label{estimate2}
	\norm{{\Cch_s}^{\omega}}^{2} \leq \frac{2^{1-b} \Gamma(2a+2b)\Gamma(-b-1)}{\pi (\gamma+1) \Gamma(2a+b-1)}    .
	\end{equation}
\end{remark}

\begin{remark} \label{remCond}
For $A(t)=\omega_\gamma(t)$ and $\omega(t)=\omega_\alpha(t)$, the corresponding quantity  $V_{\omega_\alpha^{2}/\omega_\gamma}$ is finite if and only if $ \gamma < 1 +2\alpha$. 
\end{remark}

\section{The range and null space of ${\Cch_s}^{\omega_\alpha}$}
In order to explore the basic properties of ${\Cch_s}^{\omega_\alpha}$  such as the description of its null space and the range of its restriction to the true weighted polyBergman spaces, we specify the weight functions $A(t)=\omega_\gm(t)=(1-t)^{\gm}$ and $B$, and we provide the explicit action of ${\Cch_s}^{\omega_\alpha}$ on the disc polynomials $\mathcal{R}_{m,n}^{\gamma}$. The advantage of considering such class of functions is that they form an an orthogonal basis of the Hilbert space  $L^{2,\gamma}(\Di)$. To this end, we begin by giving its action on the generic functions $$ e^{\ell}_{jk}(\xi,\bxi)=\xi^{j}\overline{\xi}^{k} (1-|\xi|^{2})^{\ell}$$
for nonnegative integers $j,k, \ell$. 

\begin{lemma}\label{lemaction} 
	We have 
	 \begin{align}\label{varep}
	\left\{ \begin{array}{ll}
	\left[{\Cch_s}^{\omega_\alpha} (e^{\ell}_{k+m,k}) \right](z)  =0  &  \quad \mbox{if } \quad m>0 \\ \quad \\
	\left[{\Cch_s}^{\omega_\alpha} (e^{\ell}_{k,k+m}) \right](z)  =  \gamma^{\omega}_{k,s} \frac{1}{z^{m+1}}                               &  \quad \mbox{if} \quad  m\geq 0 .
	\end{array}
	\right.
	\end{align}
	where $\gamma^{\omega}_{k,s}$ stands for 
	\begin{align}\label{momenw}
	\gamma^{\omega}_{k,s}=\int_{0}^{1}t^{k}(1-t)^{s}  \omega(t)dt.
	\end{align}
\end{lemma}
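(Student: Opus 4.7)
The plan is to expand the Cauchy kernel in a geometric series valid outside the closed unit disc and reduce the computation to a Mellin-type moment on $(0,1)$.

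First, I would note that for fixed $z\in\overline{\Di}^c$ (so $|z|>1$) and every $\xi\in\Di$, one has the absolutely convergent expansion
\begin{align*}
\frac{1}{z-\xi}=\frac{1}{z}\sum_{n=0}^{\infty}\paren{\frac{\xi}{z}}^{n}=\sum_{n=0}^{\infty}\frac{\xi^{n}}{z^{n+1}},
\end{align*}
where the series converges uniformly on compact subsets of $\Di$ by the estimate $|\xi/z|\leq 1/|z|<1$. Because the factor $|\xi|^{j+k}(1-|\xi|^{2})^{\ell}\,\omega_{\alpha}(|\xi|^{2})$ is bounded on $\Di$ (and integrable against $d\lambda$ provided $\alpha>-1$), I can justify exchanging the series with the integral defining $\Cch_{s}^{\omega_{\alpha}}$ via dominated convergence, obtaining
\begin{align*}
\ent{\Cch_{s}^{\omega_{\alpha}}(f)}(z)=\frac{1}{\pi}\sum_{n=0}^{\infty}\frac{1}{z^{n+1}}\int_{\Di}\xi^{n}f(\xi)\,\omega_{\alpha}(|\xi|^{2})\,d\lambda(\xi).
\end{align*}

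Next, I would substitute $f=e^{\ell}_{p,q}$ and pass to polar coordinates $\xi=re^{i\theta}$, which splits the inner integral into a radial part and an angular part of the form $\int_{0}^{2\pi}e^{i(p+n-q)\theta}d\theta$. This angular integral vanishes unless $p+n=q$, in which case it equals $2\pi$. Applied to $f=e^{\ell}_{k+m,k}$ with $m>0$, the condition becomes $n=-m<0$, so every coefficient in the series is zero, yielding the first identity in \eqref{varep}. Applied to $f=e^{\ell}_{k,k+m}$, only the term $n=m$ survives, and the series collapses to a single power $1/z^{m+1}$.

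Finally, for the surviving term I would compute the radial integral
\begin{align*}
2\pi\int_{0}^{1}r^{2(k+m)+1}(1-r^{2})^{\ell}\,\omega_{\alpha}(r^{2})\,dr
\end{align*}
by the substitution $t=r^{2}$, which produces a moment of the form $\int_{0}^{1}t^{k+m}(1-t)^{\ell}\omega_{\alpha}(t)\,dt$ matching the definition \eqref{momenw} (with the appropriate reading of the indices $k$ and $s$). Combining with the $1/\pi$ prefactor kills the $2\pi$ and gives the stated formula. The main thing to be careful about is the index bookkeeping between the exponent of the monomial in $\xi$, the series index $n$, and the resulting power $1/z^{m+1}$; once that is aligned, the proof is a clean computation. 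Nothing delicate arises from the weight because $\omega_{\alpha}\in L^{1}(0,1)$ for $\alpha>-1$, so all interchanges are legitimate.
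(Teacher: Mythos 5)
Your proof is correct and follows essentially the same route as the paper's: expand the Cauchy kernel in a geometric series, interchange sum and integral, let the angular integral in polar coordinates select the single surviving term, and evaluate the radial moment via the substitution $t=r^{2}$. Your reading of the surviving moment as $\int_{0}^{1}t^{k+m}(1-t)^{\ell}\omega(t)\,dt=\gamma^{\omega}_{k+m,\ell}$ is in fact the correct one; the constant $\gamma^{\omega}_{k,s}$ in the lemma's statement is an index typo, as one sees from the paper's own general formula $[{\Cch_s}^{\omega}(e^{s}_{jk})](z)=\varepsilon_{k-j}\,\gamma^{\omega}_{k,s}\,z^{-(k-j+1)}$ specialized to $j=k$, $k\mapsto k+m$, $s=\ell$.
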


\begin{proof}
	Notice first that for any $z\in \overline{\Di}^c $  and $\xi \in D,$ we have $\xi/z\in \Di$. 
	Then, by expanding the kernel function, 
	we obtain
	\begin{align}
	\left[{\Cch_s}^{\omega_\alpha} (e^{s}_{jk}) \right](z)
	&=\dfrac{1}{\pi}\sum_{l=0}^{+\infty}\frac{1}{z^{l+1}}\int_{\Di}\xi^{j+l}(\overline{\xi})^{k}(1-|\xi|^{2})^{s}  \omega\left(|\xi|^2\right)d\lambda(\xi) \nonumber \\
	&=\sum_{l=0}^{+\infty}\frac{1}{z^{l+1}}\left( \int_{0}^{1}t^{k}(1-t)^{s}  \omega (t )dt\right)  \delta_{j+l,k} \nonumber 
	\\&=\varepsilon_{k-j}\frac{\gamma^{\omega}_{k,s}}{z^{k-j+1}}, \label{actionejks}
	\end{align}
	where 
	 \begin{align}\label{varep}
	\varepsilon_{p} =
	\left\{ \begin{array}{ll}
	1 &  \quad \mbox{if } \quad p\geq 0 \\
	0                               &  \quad \mbox{if} \quad  p <0 .
	\end{array}
	\right.
	\end{align}
	The last equalities follow by the use of polar coordinates $\xi=re^{i\theta}$, the fact that $\int_{0}^{2\pi}e^{i(m-n)\theta}d\theta=2\pi\delta_{n,m}   $ and the change of variable $r^{2}=t$, keeping in mind the definition of $\gamma^{\omega}_{k,s}$ and $
	\varepsilon_{p}$ given through \eqref{momenw} and \eqref{varep}, respectively. 
\end{proof}

Accordingly, it is clear from Lemma \ref{lemaction} that the holomorphic functions  ${\Cch_s}^{\omega_\alpha}(e^{s}_{jk})$   belong to $Ker({\Cch_s}^{\omega_\alpha})$, for any $s$ and any $j>k$.
Moreover, we assert the following 

\begin{proposition}\label{prop5assertions} 
	The function ${\Cch_s}^{\omega_\alpha}(e^{s}_{jk})$ belongs to $L^{2,B}(\overline{\Di}^c)$ and its square norm is given by 
	$$\norm{{\Cch_s}^{\omega_\alpha}(e^{s}_{jk}) }^{2}_{B}=\pi \varepsilon_{k-j}(\gamma^{\omega}_{k,s})^{2}  \int_0^1 \frac{B(1/u)}{u^{k-j+1}}  du.
	$$ Moreover, the system $\left( {\Cch_s}^{\omega_\alpha}(e^{s}_{jk})\right)_{j,s}$ is  orthogonal in $L^{2,B}(\overline{\Di}^c)$ for every fixed $k$. 
\end{proposition}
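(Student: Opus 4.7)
The plan is to reduce both assertions to Lemma~\ref{lemaction}, which gives the explicit formula
\[
\left[{\Cch_s}^{\omega_\alpha}(e^{s}_{jk})\right](z) = \varepsilon_{k-j}\, \gamma^{\omega}_{k,s}\, z^{-(k-j+1)}, \qquad z\in \overline{\Di}^c.
\]
Once this is in hand, the image is either identically zero (when $j>k$) or a scalar multiple of the monomial $1/z^{k-j+1}$, so the whole argument collapses to integrating purely radial/angular factors on $\{|z|>1\}$.

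For the norm identity I would pass to polar coordinates $z=re^{i\theta}$ and write
\[
\norm{{\Cch_s}^{\omega_\alpha}(e^{s}_{jk})}_B^{2} = \varepsilon_{k-j}\,(\gamma^{\omega}_{k,s})^{2}\, \int_{0}^{2\pi}\!\!\int_{1}^{\infty} r^{-2(k-j+1)} B(r^{2})\, r\, dr\, d\theta.
\]
The $\theta$-integral contributes $2\pi$, and then the change of variable $u=1/r^{2}$ (so $r\, dr=-\tfrac{1}{2}u^{-2}du$ and $B(r^{2})=B(1/u)$) converts the remaining integral into one over $(0,1)$, producing the claimed closed form. Implicit here is the assumption that the resulting integral $\int_{0}^{1} u^{-(k-j+1)} B(1/u)\, du$ is finite, which must be included as a standing hypothesis on $B$ (compatible with \eqref{assumstar2}).

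For the orthogonality statement I would again work in polar coordinates. Fix $k$ and consider two members ${\Cch_s}^{\omega_\alpha}(e^{s}_{jk})$ and ${\Cch_s}^{\omega_\alpha}(e^{s'}_{j'k})$ corresponding to admissible $j,j'\leq k$. Their $L^{2,B}$-scalar product factors as a constant times
\[
\int_{0}^{2\pi} e^{i\,((k-j')-(k-j))\theta}\, d\theta \;\cdot\; \int_{1}^{\infty} r^{-(k-j)-(k-j')-1}\,B(r^{2})\, dr,
\]
and the standard identity $\int_{0}^{2\pi} e^{in\theta}d\theta=2\pi\delta_{n,0}$ forces vanishing whenever $j\neq j'$. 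Note that when $j=j'$ and $s\neq s'$ the two functions are collinear, so the genuine orthogonality content of the statement is precisely orthogonality across distinct values of $j$; this is what the angular computation delivers.

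I do not anticipate a real obstacle: the one point that requires care is tracking exponents through the substitution $u=1/r^{2}$ to confirm the exact form of the $u$-integral in the statement, and noting the integrability condition on $B$ that makes $\norm{{\Cch_s}^{\omega_\alpha}(e^{s}_{jk})}_B$ finite in the first place.
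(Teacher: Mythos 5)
Your proof is correct and follows essentially the same route as the paper: both reduce everything to the explicit formula of Lemma~\ref{lemaction} and evaluate the relevant integrals over $\overline{\Di}^c$ in polar coordinates, the only difference being that the paper treats the norm and the orthogonality in a single general computation of $\scal{{\Cch_s}^{\omega_\alpha}(e^{s}_{jk}), {\Cch_s}^{\omega_\alpha}(e^{r}_{mn})}_{B}$ in \eqref{scaler}, whereas you handle the two assertions separately. One caveat: if you actually carry your substitution $u=1/r^{2}$ to the end, the norm comes out as $\pi\,\varepsilon_{k-j}(\gamma^{\omega}_{k,s})^{2}\int_{0}^{1}u^{k-j-1}B(1/u)\,du$, which agrees with the paper's own display \eqref{scaler} but not with the exponent printed in the proposition (a sign typo there), so your claim that the substitution ``produces the claimed closed form'' should be sharpened; on the other hand, your observation that for fixed $j$ and distinct $s,s'$ the images are collinear (so that the orthogonality is genuinely only in the index $j$) is a correct refinement that the paper's proof passes over.
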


\begin{proof} For the proof, we need only to compute 
	$\scal{{\Cch_s}^{\omega_\alpha}(e^{s}_{jk}), {\Cch_s}^{\omega_\alpha}(e^{r}_{mn}) ^{\omega_\alpha}}_{B}$ for arbitrary $m,n,j,k$. Indeed, by \eqref{actionejks}, we get  
	\begin{align}
	\scal{{\Cch_s}(e^{s}_{jk}), {\Cch_s}(e^{r}_{mn}) }_{B}&=\varepsilon_{k-j}\varepsilon_{n-m} \gamma^{\omega}_{k,s}\gamma^{\omega}_{n,r}\int_{\overline{\Di}^c}\frac{1}{z^{k-j+1}\overline{z}^{n-m+1}}
	B(|z|^2)  d\lambda(z)  \nonumber 
	\\&=\pi\varepsilon_{k-j}\gamma^{\omega}_{k,s}\gamma^{\omega}_{n,r} \delta_{k-j,n-m}\int_{1}^{+\infty}\frac{1}{t^{k-j+1}} B(t)  dt \nonumber 
	\\&=\pi\varepsilon_{k-j}\gamma^{\omega}_{k,s}\gamma^{\omega}_{n,r}\delta_{k-j,n-m}\int_{0}^{1}u^{k-j-1}B(1/u)  du.  \label{scaler}
	\end{align}
	This proves the second assertion in $1)$, while the first assertion follows as  particular case by taking $m=j$.
	The result in $2)$ is exactly \eqref{scaler} with $m=j$ and $n=k$. 
\end{proof}

\begin{remark}
	In view of \eqref{scaler}, it is clear that the family $\left( {\Cch_s}^{\omega_\alpha}(e^{s}_{jk})\right)_{j,k,s}$ is not.
\end{remark}

Using Lemma \ref{lemaction}, we  give the explicit action of ${\Cch_s}^{\omega_\alpha}$ on the disc polynomials.  

\begin{proposition}\label{propactionZ}
	Let $\gamma >-1$. Then, for every nonnegative integers $m,n$, there exists some constant $c^{\gamma,\omega}_{m,n}$, depending in $\gamma$, $\omega$, $m$ and $n$, such that 
	\begin{align*}
	{\Cch_s}^{\omega_\alpha} (\mathcal{R}^{\gamma}_{m,n})(z) 
	= c^{\gamma,\omega}_{m,n}  \frac{z^m}{z^{n+1}}.
	\end{align*}
	For the weight function $\omega(t)=\omega_\alpha(t)=(1-t)^\alpha$, it is given explicitly by  
$$
c^{\gamma,\omega_\alpha}_{m,n} = \varepsilon_{n-m} 
\frac{(\gamma-\alpha)_m n!}{(\alpha+1)_{n+1}(\gamma+1)_m} .  
$$  
\end{proposition}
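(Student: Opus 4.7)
My plan is to exploit the linearity of ${\Cch_s}^{\omega_\alpha}$ together with Lemma \ref{lemaction}. Reading off \eqref{expZ}, the disc polynomial is already presented as a finite linear combination of precisely the test functions that the lemma treats explicitly:
\begin{align*}
\mathcal{R}_{m,n}^{\gamma}(\xi,\bar\xi) \;=\; m!\,n! \sum_{j=0}^{m\wedge n} \frac{(-1)^{j}}{j!\,(\gamma+1)_{j}\,(m-j)!\,(n-j)!}\, e^{\,j}_{m-j,\,n-j}(\xi,\bar\xi).
\end{align*}
Applying ${\Cch_s}^{\omega_\alpha}$ termwise and matching indices with Lemma \ref{lemaction} (the lemma's $s$ corresponds to the summation index $j$, its $j$ to $m-j$, and its $k$ to $n-j$), each summand contributes a factor $\varepsilon_{(n-j)-(m-j)}=\varepsilon_{n-m}$ together with the spatial weight $1/z^{n-m+1}=z^{m}/z^{n+1}$. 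This already establishes the product form asserted in the proposition and shows that the prefactor vanishes for $m>n$, consistently with the $\varepsilon_{n-m}$ in the claimed closed expression.

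The next task is to evaluate the arising moments for $\omega=\omega_\alpha$ and simplify. A Beta-function computation yields
\begin{align*}
\gamma^{\omega_\alpha}_{n-j,\,j} \;=\; \int_{0}^{1} t^{\,n-j}(1-t)^{\,j+\alpha}\,dt \;=\; \frac{(n-j)!\,(\alpha+1)_{j}}{(\alpha+1)_{n+1}}.
\end{align*}
Plugging this back in, the factor $(n-j)!$ cancels against the one coming from \eqref{expZ}, and I am left with
\begin{align*}
c^{\gamma,\omega_\alpha}_{m,n} \;=\; \varepsilon_{n-m}\, \frac{m!\,n!}{(\alpha+1)_{n+1}} \sum_{j=0}^{m} \frac{(-1)^{j}\,(\alpha+1)_{j}}{j!\,(\gamma+1)_{j}\,(m-j)!}.
\end{align*}

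The final step is to recognize this finite sum as a terminating Gauss series. Using $(-1)^{j}/(m-j)!=(-m)_{j}/m!$, it rewrites as $\frac{1}{m!}\,{}_{2}F_{1}(\alpha+1,-m;\gamma+1;1)$, and the Chu--Vandermonde identity ${}_{2}F_{1}(a,-m;c;1)=(c-a)_{m}/(c)_{m}$, applied with $a=\alpha+1$ and $c=\gamma+1$, gives $(\gamma-\alpha)_{m}/[m!\,(\gamma+1)_{m}]$. Substituting delivers exactly $c^{\gamma,\omega_\alpha}_{m,n}=\varepsilon_{n-m}\,n!\,(\gamma-\alpha)_{m}/[(\alpha+1)_{n+1}(\gamma+1)_{m}]$, as claimed. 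The only mildly delicate point is bookkeeping: one must carefully track the three-fold index convention in Lemma \ref{lemaction} (the exponents of $\xi$, $\bar\xi$, and $1-|\xi|^{2}$) against the summation variable $j$ of \eqref{expZ} in order to extract the correct Beta parameters; after that, the hypergeometric identification and the Chu--Vandermonde summation close the argument without further work.
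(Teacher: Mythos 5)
Your proposal is correct and follows essentially the same route as the paper: decompose $\mathcal{R}^{\gamma}_{m,n}$ into the generic elements $e^{j}_{m-j,n-j}$, apply Lemma \ref{lemaction} termwise, evaluate the moments $\gamma^{\omega_\alpha}_{n-j,j}$ as Beta integrals, and sum via the terminating ${}_2F_1$ at $1$ with Chu--Vandermonde. Your index bookkeeping and the rewriting $(-1)^j/(m-j)!=(-m)_j/m!$ are exactly what the paper does (and your version even fixes the paper's typo $(-m)^j$ for $(-m)_j$ in its display of the sum).
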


\begin{proof}
	By setting  
	$$c^{\gamma,j}_{m,n}
	=\frac{(-1)^{j} m!n!   }{(\gamma+1)_j j!(m-j)!(n-j)!},  
	$$
	we can rewrite the disc polynomials  \eqref{expZ} by mean of the  generic elements $e^{j}_{n-j,m-j}$ as
	\begin{align*}
	\mathcal{R}_{m,n}^{\gamma}(z,\bar{z})
	= \sum_{j=0}^{\minmn }c^{\gamma,j}_{m,n}e^{j}_{m-j,n-j}(z,\bar{z}).
	\end{align*}
	Subsequently, the linearity of the weighted Cauchy transform and Lemma \ref{lemaction} show that 
	\begin{align*}
	{\Cch_s}^{\omega_\alpha}(\mathcal{R}^{\gamma}_{m,n})(\xi) 
	&=\varepsilon_{n-m}\left( \sum_{j=0}^{m }c^{\gamma,j}_{m,n}\gamma^{\omega}_{n-j,j}\right) \frac{1}{z^{n-m+1}} = c^{\gamma,\omega}_{m,n}  \frac{z^m}{z^{n+1}},
	\end{align*}
	where the quantity  $\gamma^{\omega_\alpha}_{n-j,j}$, for  $\omega(t):=
	\omega_\alpha(t)=(1-t)^\alpha$, reduces to a beta function,
		\begin{align*}
	\gamma^{\omega_\alpha}_{n-j,j}=
	\int_0^1 t^{n-j} (1-t)^{\alpha+j} dt &= \frac{(n-j)!(\alpha+1)_j}{(\alpha+1)_{n+1}}. 
	\end{align*}
	 Therefore, the computation of the involved finite sum, that we denote $S^{\gamma,\alpha}_{m,n} $,
	 it turns out to be the value of a Gauss hypergeometric function at $1$. Indeed, we have 
	  \begin{align} 
S^{\gamma,\alpha}_{m,n} 
& = \frac{ n!}{(\alpha+1)_{n+1} } \sum_{j=0}^{m}\frac{(-m)^{j} (\alpha+1)_j}{ (\gamma+1)_j} \frac{1 }{ j!}  \nonumber
\\& = \frac{ n!}{(\alpha+1)_{n+1}} {_2F_1}\left( \begin{array}{c} -m,\alpha+1 \\ \gamma+1 \end{array}\bigg | 1\right) 
.  	\label{Smn}\end{align}
By mean of  Chu--Vandermonde identity 
$$ {_2F_1}\left( \begin{array}{c} -m,b \\ c \end{array}\bigg | 1\right) =\frac{ (c-b)_m }{(c)_m} ,$$
the explicit expression of the constant $c^{\gamma,\omega}_{m,n}$, for  $\omega(t)=\omega_\alpha(t)$, reduces to 
$$
c^{\gamma,\omega_\alpha}_{m,n} = \varepsilon_{n-m} 
\frac{(\gamma-\alpha)_m n!}{(\alpha+1)_{n+1}(\gamma+1)_m}.   
$$ 
\end{proof}

Subsequently,  it is clear that the following assertions hold trues
\begin{itemize}
	\item[1)] The range of $\Cch_s^{\omega_\alpha}$ restricted to $\AnTWDi_{n}(\Di)$ is finite dimensional vector space with dimension do not exceed  $n+1$, since $\mathcal{R}^{\gamma}_{m,n}(\xi,\overline{\xi})$ form an orthogonal basis of the true weighted Bergman spaces \cite{ElHartiElkGh2020A}.
	\item[2)] $\mathcal{R}_{m,n}^{\gamma}\in \ker({\Cch_s}^{\omega_\alpha})$, for any $m>n.$
	\item[3)] We have    ${\Cch_s}^{\omega_\alpha}(\mathcal{R}^{\gamma}_{m,n})=\mbox{const.}{\Cch_s}(e^{s}_{n,m}),$
	and for fixed $n$ and varying $m=0,1,2,\cdots $, they form an orthogonal system of is holomorphic functions in $L^{2,B}(\overline{\Di}^c)$. 
	\item[4)] 
	For $\alpha=\gamma$, the involved constant is exactly zero for any $m>0$, while for $m=0$, it reduces to 
	\begin{align*}
	{\Cch_s}^{\omega_\alpha}(\mathcal{R}^{\gamma}_{0,n})(\xi) 
	&=\frac{n!  }{(\gamma+1)_{n+1}}  \frac{1}{z^{n+1}}
	.
	\end{align*}
\end{itemize}



With the material presented in this section we are now able to prove our main result. 

\section{Proof of main result (Theorem \ref{thm1})} 

Notice first that since we are placed in the case of $\omega =\omega_\alpha$, we have to assume that $\alpha >(\gamma-1)/2 >-1$ (by Remark \ref{remCond}) to ensuring the boundedness of $\Cch_s^{\omega_\alpha}$,  the finiteness of the weight function $\omega_\gamma$ and therefore the fact that the disc polynomials is an orthogonal basis of $L^{2,\gamma}(\Di)$. According to the explicit expression of the constant $	c^{\gamma,\omega_\alpha}_{m,n}$ in Proposition \ref{propactionZ}, it is clear that $\Cch_s^{\omega_\alpha} \mathcal{R}^{\gamma}_{m,n} \neq 0$ if and only if $n <m$ and $(\gamma-\alpha)_m\neq 0$ with $m>\alpha-\gamma$. In particular,  $Span\{ \mathcal{R}^{\gamma}_{m,n}; m <n\} \subset \ker \Cch_s^{\omega_\alpha}$. 
For the determination of $\dim (\Cch_s^{\omega_\alpha}(\AnTWDi_{n}(\Di))) \leq n+1$,  two cases are to be distinguished $\gamma -\alpha\in \Z^-_0=\{0,1,2,\cdots\}$ and  $\gamma -\alpha\notin \Z^-_0$. 

Thus, if  $\gamma -\alpha\notin \Z^-_0$, then $(\gamma -\alpha)_m$ is not zero for every nonnegative integer $m\geq 0$, and 
$\Cch_s^{\omega_\alpha} \mathcal{R}^{\gamma}_{m,n} =0$ if and only if $m > n$.  
Subsequently, the restriction of the solid weighted Cauchy transform ${\Cch_s}$ to $\AnTWDi_{n}(\Di)$ is spanned by 
$$ \frac{z^m}{z^n+1}; \quad m=0, 1, 2, \cdots , n .$$ 
So that the dimension of is infected by the weight functions and is equal to $n+1$.
  
Now, when  $\gamma -\alpha \in \Z^-_0$, we have 
	\begin{align*}
{\Cch_s}^{\omega_\alpha} (\mathcal{R}^{\gamma}_{m,n})(\xi) 
= \left\{ \begin{array}{ll}
\displaystyle  0; & \mbox{ if } m> n \\ 
\displaystyle  \frac{(\gamma-\alpha)_m n!}{(\alpha+1)_{n+1}(\gamma+1)_m} \frac{z^m}{z^{n+1}} ; & \mbox{ if }    m \leq \min(n,\alpha -\gamma+1) \\
\displaystyle   0; & \mbox{ if } \alpha -\gamma+1  \leq m\leq n .   
\end{array}\right.
\end{align*}  
In this case, ${\Cch_s}^{\omega_\alpha} \ne 0$ if and only if  $m \leq \min(n,\alpha -\gamma+1)$. Hence, it follows
	$${\Cch_s}^{\omega_\alpha}(\AnTWDi_{n}(\Di)) = Span\left\{  \frac{z^m}{z^{n+1}} ; 0\leq m \leq  \min (n,\alpha-\gamma+1) \right\}$$
	and 
	$$\ker {\Cch_s^{\omega_\alpha}}|_{\AnTWDi_{n}(\Di)} =  
Span\left\{  \mathcal{R}^{\gamma}_{m,n} ;   m \geq  \min (n,\alpha-\gamma+1) \right\}.$$
Clearly the dimension of ${\Cch_s}^{\omega_\alpha}(\AnTWDi_{n}(\Di))$ is finite and given by 
$$N= \min (n,\alpha-\gamma+1) +1.$$

Moreover,  ${\Cch_s}^{\omega_\alpha}(\AnTWDi_{n}(\Di))_{n}$ is an increasing sequence of spaces.

\begin{remark}
	For $\gamma=\alpha$, the ranges ${\Cch_s}^{\omega_\alpha}(\AnTWDi_{n}(\Di))_{n}$ are of all of dimension one. 
\end{remark}
\begin{remark}
	The case of $\gamma > \alpha >(\gamma-1)/2 >-1$ is clearly contained in the case of $\gamma -\alpha \notin \Z^-_0$. Wile the case of $-1<\gamma < \alpha$ depends on the quantization of $\gamma - \alpha$.  
\end{remark}

%

\end{document}